\newtheorem{theorem}{Theorem}[section]
\newtheorem{lemma}[theorem]{Lemma}
\newtheorem{definition}[theorem]{Definition}
\numberwithin{equation}{section}
\DeclareMathOperator \End{End}
\def\qed{\vbox{\hrule
\hbox{\vrule\hbox to 5pt{\vbox to 8pt{\vfil}\hfil}\vrule}\hrule}}
\newcommand{\beg}{\begin{eqnarray*}}
\newcommand{\begn}{\begin{eqnarray}}
\newcommand{\en}{\end{eqnarray*}}
\newcommand{\enn}{\end{eqnarray}}
\newcommand{\tr}{\mbox{\rm tr\,}}
\begin{document}
%\starttext \baselineskip=15pt \setcounter{footnote}{0}

%\begin{center}
%{\Large\bf
%Generalized Donaldson's Functionals and Related Nonlinear Partial Differential Equations}
%\end{center}
%
%\medskip
%
%\centerline{Chuanjing Zhang, Xi Zhang and Xiangwen Zhang
%%\footnote{The first and second named authors
%%were supported by the NCN grant 2013/08/A/ST1/00312. The third named author was supported by the Simons Collaboration
%%Grant-523313.\\ Keywords: degenerate complex Hessian equations, $C^{1, 1}$ regularity, optimal exponent.}
%}
%
%
%\bigskip
%
%
%\begin{abstract}
%{\small In this paper, we introduce a family of generalized Donaldson's functional on holomorphic vector bundles, whose Euler-Lagrange equations are the vector bundle version of the complex $k$-Hessian equations. We also discuss the uniqueness of solutions to these equations.}
%\end{abstract}

%%%%%%%%%

\title{\textsc{Generalized Donaldson's functionals and related nonlinear partial differential equations }}
\author{Chuanjing Zhang and  Xi Zhang}
\address{Chuanjing Zhang\\School of Mathematical Sciences,\\
University of Science and Technology of China,\\
Hefei, 230026,P.R. China\\}\email{chjzhang@mail.ustc.edu.cn}
\address{Xi Zhang\\School of Mathematical Sciences,\\
University of Science and Technology of China,\\
Hefei, 230026,P.R. China\\ } \email{mathzx@ustc.edu.cn}

\subjclass[]{53C07, 58E15}
\keywords{holomorphic vector bundle,\ K\"ahler manifold,\ nonlinear PDE }
\thanks{The second author is the corresponding author. The authors are partially supported by NSF in China No.11625106,
11801535 and 11721101. The research was partially supported by the project "Analysis and Geometry on Bundle" of Ministry of Science and Technology of the People's Republic of China, No.SQ2020YFA070080.}
\maketitle

\vspace{-0.5cm}
\begin{abstract}
In this paper, we introduce a family of generalized Donaldson's functional on holomorphic vector bundles, whose Euler-Lagrange equations are a vector bundle version of the complex $k$-Hessian equations. We also discuss the uniqueness of solutions to these equations.
\end{abstract}

\medskip

\section{Introduction}

\medskip

Let $(M, \omega )$ be an $n$-dimensional compact K\"ahler manifold and $(E, \overline{\partial}_{E})$ be a holomorphic vector bundle over $M$. For any $B\in \Omega^{2}(\End(E))$, that is any $\End(E)$-valued 2-form, we denote
\begin{equation}
B^{k}:=\underbrace{B\wedge B\cdots\wedge B}_k,
\end{equation}
with $k\in \mathbb N$ and $k\leq n$. For any given Hermitian metric $H$ on $E$, we use $D_{H}$ to denote the Chern connection with respect to the metric $H$ and the holomorphic structure, and $ F_H$ is the curvature tensor of the Chern connection $D_H$.

\medskip

Let $H_{0}$ be a fixed Hermitian metric on $E$ and $H(s)$  a path connecting metrics $H_{0}$ and another Hermitian metric $H$. For any integer $0<k\leq n$, we introduce the following generalized Donaldson's functionals
 \begin{equation}
 \mathcal{M} _{E, k}^{0}(H_{0}, H)=\int_{0}^{1}\int_{M} \tr \left[\left(\sqrt{-1}F_{H(s)}\right)^{k}\, H(s)^{-1}\,\frac{\partial H(s)}{\partial s}\right]\wedge \frac{\omega^{n-k}}{(n-k)!}\,ds
 \end{equation}
 and
 \begin{equation}\label{Fk}
  \mathcal{M} _{E, k }(H_{0}, H)= \mathcal{M} _{E, k}^{0}(H_{0}, H)-\int_{M} \lambda_{E, k}\cdot \log \det (H_{0}^{-1}H)\, \frac{\omega^{n}}{n!},
 \end{equation}
where the constant $\lambda_{E, k}=(2\pi )^{k}\,(k!)\,\frac{ch_{k}(E)\cdot \left[\frac{\omega^{n-k}}{(n-k)!}\right]}{{\rm rank}(E)\cdot {\rm vol}(M, \omega )}$.

\medskip

As it will be shown in \S 2, the functional $\mathcal{M} _{E, k }(H_{0}, H)$ is well defined, i.e. it is independent of the choice of path $H(s)$. Indeed, when $k=1$, the above functional $\mathcal{M} _{E, 1}(H_{0}, H)$ is just the well-known {\it Donaldson's functional}, which was introduced by Donaldson (\cite{Donaldson85}, \cite{Ko1}) in his groundbreaking work on the study of the Hermitian-Yang-Mills connection. In \cite{Donaldson85}, it is an important observation that the Euler-Lagrange equation of the Donaldson's functional is just the Hermitian-Yang-Mills equation
\begin{equation}\label{HYM}
\sqrt{-1}\, \Lambda_{\omega } F_{H}=\lambda_{E, 1} {\rm Id}_{E}.
\end{equation}
The solvability of the above equation has many interesting geometric applications. Motivated by the study of the inequalities between Chern numbers, we introduce the generalized Donaldson's functional by increasing the power for the curvature tensor $F_H$, which corresponds to higher order Chern classes.

\medskip

In \S 2, we will show that the Euler-Lagrange equation of the generalized Donaldson's functional $\mathcal M_{E, k }$ is given by
 \begin{equation}\label{Bk}
\frac{\left(\sqrt{-1}F_{H}\right)^{k}\wedge \frac{\omega^{n-k}}{(n-k)!}}{\frac{\omega^{n}}{n!}}=\lambda_{E, k}\,{\rm Id}_{E}.
\end{equation}
It is easy to see that, when $k=1$, the above equation is just the standard Hermitian-Yang-Mills equation (\ref{HYM}). On the other hand, if $(E, \overline{\partial}_{E})$ is a line bundle, the equation (\ref{Bk}) reduces to the complex $k$-Hessian equation. Given this connection, we will simply call the above equation as the {\it vector bundle valued $k$-Hessian equation}. In fact, such type of equations were considered in various settings. When $k=n$, the equation (\ref{Bk}) is a vector bundle version of the complex Monge-Amp\`ere equation introduced by Pingali (\cite{Pin}) from the viewpoint of moment map, which generalizes the dHYM equations studied in \cite{CJY, CXY} to vector bundles. When $k=2$, equations as (\ref{Bk}) also arise naturally in the Hull-Strominger system from theoretical physics. One crucial equation in the system is the so-called anomaly cancellation equation, which can be viewed as a constraint on the second Chern class and it has similar form as (\ref{Bk}). The system has been studied extensively in recent years, see for example \cite{FY1, FY2, H, PPZ1, PPZ2, PPZ3, PPZ4, S}.

\medskip

For the Hermitian-Yang-Mills equation (\ref{HYM}), the classical Donaldson-Uhlenbeck-Yau theorem (\cite{NS65}, \cite{Donaldson85}, \cite{UY}) states that a holomorphic bundle admits the Hermitian-Einstein metric if and only if it is poly-stable in the sense of Mumford-Takemoto. There are  many  interesting and important works related (\cite{ag,Bi,Bu,Donaldson87,HIT,JZ, LN2, LZ, LZZ, LZZ2,LY87, Mo1, Mo2, Mo3,SIM,SIM2}, etc.). The complex Monge-Amp\`ere equation has been the subject of intensive studies in the past forty years, since the groundbreaking work by Yau (\cite{Y}) on the proof of the Calabi conjecture (\cite{Cal0}). Many interesting works were carried out by many people in the past several decades (see e.g. \cite{Au,CY,CKNS,MY,Kob,TY1,TY2,TY3,TY4,Ti1,Ti2,Gb,GPF}).

\medskip

Given all the above nice connections mentioned above, we believe that it will be very interesting to study the vector bundle version of the complex Hessian equation (\ref{Bk}). In this paper, we are going to study the uniqueness problem of the solution. Before doing that, one has to address the issue about the natural set for the solutions as the G\"arding cone for the standard Hessian equation for functions (see for example \cite{Wang}). However, the situation here is much more complicated than the scalar case.
%  We can also give a a moment map interpretation of the vector bundle version of $\sigma_{k}$-equation (\ref{Bk}).???
In the following, the space of Hermitian metrics on $E$ will be denoted by $\emph{Herm}(E)$. For  any Hermitian metric $H\in \emph{Herm}(E)$, we denote
\begin{equation}\label{01}
S_{H}(E)=\left\{\eta \in \Omega^{0}(M, End(E))\,| \, \eta ^{\ast H}=\eta \right\},
\end{equation}
\begin{equation}\label{02}
S_{H, \epsilon}(E)=\left\{\eta \in S_{H}(E)\, | \ \sup_{M}|\eta |_{H}< \epsilon , \ \sup_{M}\left(|\partial_{H}\eta |_{H}+|\overline{\partial } \partial_{H}\eta |_{H}\right)< \epsilon \right\}
\end{equation}
and
\begin{equation}\label{03}
B_{H, \epsilon}(E)=\left\{K \in \emph{Herm}(E)\, | \ \log H^{-1}K \in  S_{H, \epsilon}(E)\right\}.
\end{equation}

\medskip

Now, we introduce a {\it positivity condition}, which is a natural generalization for the $\Gamma_k$ cone for scalar Hessian equation.

\begin{definition} Let $H$ be a Hermitian metric on the holomorphic vector  bundle $(E, \overline{\partial }_{E})$ over a compact K\"ahler manifold $(M, \omega )$. The curvature tensor $\sqrt{-1}F_{H}$ is said to be $\sigma_{k}$-$\omega$-positive on a point $p\in M$ if, for any non-zero $End(E)$-valued $(0, 1)$-form $\xi$, we have
\begin{equation}\label{p1}
\frac{\omega^{n-k}}{(n-k)!}\wedge  \sqrt{-1}\,\tr\left[\sum_{i=0}^{k-1}\left(\sqrt{-1}F_{H}\right)^{i}\wedge \xi^{\ast H} \wedge \left(\sqrt{-1}F_{H}\right)^{k-1-i}\wedge \xi \right]\bigg|_{p}>0.
\end{equation}
We denote the set of Hermitian metrics whose curvature are $\sigma_{k}$-$\omega$-positive on $M$ by $\mathcal{H}_{\omega , k}$.
\end{definition}

\medskip

%{\bf Remark: } {\it Pingali (arXiv:1804.03934v4) consider the case $k=n$, and said it be MA-positive.}

It is a natural question whether the set $\mathcal{H}_{\omega , k}$ is indeed convex or not. That is, for any two metrics $K, H \in \mathcal{H}_{\omega , k}$, let $H(t)$ be the geodesic connecting $K$ and $H$, do we have $H(t)\in \mathcal{H}_{\omega , k}$? Indeed, if one can give a positive answer to the above question, by the second variation formula (\ref{v2}) for the generalized Donaldson's functional (\ref{Fk}), one can conclude that a solution $H\in \mathcal{H}_{\omega , k}$ of the bundle-valued complex  $k$-Hessian equation (\ref{Bk}) attains an absolute minimum of the generalized Donaldson's functional (\ref{Fk}), and then obtain the uniqueness of the solution. At present we can not prove the convexity of $\mathcal{H}_{\omega , k}$. Even if we can't give a positive answer to the above question, we can still prove local uniqueness of the solution of  bundle version complex  $k$-Hessian equation (\ref{Bk}) and the local minimum in the following theorem.

\begin{theorem}\label{thm1}
Let  $(E, \overline{\partial}_{E})$ be a holomorphic vector bundle over an $n$-dimensional compact K\"ahler manifold $(M, \omega )$, $H_{0}$ be a Hermitian metric on $E$. If $H$ is a solution of equation (\ref{Bk}) and   its curvature is $\sigma_{k}$-$\omega$-positive on $M$, then there exists a small number $\epsilon$ such that, for any $\tilde{H}\in B_{H, \epsilon}(E)$, we have
\begin{equation}\label{11}
  \mathcal{M} _{E, k }(H_{0}, H)\leq  \mathcal{M} _{E, k }(H_{0}, \tilde{H}).
 \end{equation}
Moreover, if the holomorphic bundle $(E, \overline{\partial}_{E})$ is simple and $K\in B_{H, \epsilon}(E) $ is another solution of (\ref{Bk}), then there  exists a  constant $a$ such that $K=e^aH$.
\end{theorem}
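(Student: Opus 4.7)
The plan is to exploit the second variation formula for $\mathcal{M}_{E,k}$ promised in Section 2, together with the pointwise positivity (\ref{p1}) defining $\mathcal{H}_{\omega,k}$.

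First I would make explicit the second variation along geodesics. For a smooth path $H(s)$ with $v(s) = H(s)^{-1}\partial_s H(s)$, one has $\partial_s F_{H(s)} = \bar\partial\bigl(\partial_{H(s)} v(s)\bigr)$. Along a geodesic $H(t) = H\,e^{tv}$ with $v \in S_H(E)$ constant, differentiating the first-variation identity once more, integrating by parts using $\bar\partial F_H = 0$, and applying the Chern-connection identity $\partial_H v = (\bar\partial v)^{*H}$ for $H$-self-adjoint $v$, yields
\begin{equation*}
\frac{d^{2}}{dt^{2}} \mathcal{M}_{E,k}^{0}(H_0, H(t)) = \int_M \sqrt{-1}\,\tr\Bigl[\sum_{i=0}^{k-1} (\sqrt{-1}F_{H(t)})^{i}\wedge (\bar\partial v)^{*H(t)} \wedge (\sqrt{-1}F_{H(t)})^{k-1-i}\wedge \bar\partial v\Bigr]\wedge \frac{\omega^{n-k}}{(n-k)!}.
\end{equation*}
The correction $-\int_M \lambda_{E,k}\log\det(H_0^{-1}H(t))\,\tfrac{\omega^n}{n!}$ is affine in $t$ since $\log\det(e^{tv}) = t\,\tr v$, so it does not contribute. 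The integrand above is exactly the pointwise quantity in (\ref{p1}) with $\xi = \bar\partial v$, evaluated at $H(t)$.

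For the local-minimum inequality (\ref{11}): given $\tilde H \in B_{H,\epsilon}(E)$, set $\eta = \log(H^{-1}\tilde H) \in S_{H,\epsilon}(E)$ and take the geodesic $H(t) = He^{t\eta}$, $t\in[0,1]$. Because $B_{H,\epsilon}(E)$ controls $|\eta|_H$, $|\partial_H \eta|_H$ and $|\bar\partial\partial_H \eta|_H$ uniformly by $\epsilon$, the curvature $F_{H(t)}$ differs from $F_H$ in $C^0$-norm by $O(\epsilon)$ uniformly in $t$. Since $\sigma_k$-$\omega$-positivity is an open condition in the $C^0$-topology of the curvature, for $\epsilon$ small enough the whole path lies in $\mathcal H_{\omega,k}$. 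Hence the integrand in the second variation is pointwise non-negative, so $f(t) := \mathcal M_{E,k}(H_0,H(t))$ is convex on $[0,1]$. Since $H$ solves (\ref{Bk}), the Euler--Lagrange equation gives $f'(0) = 0$, and convexity yields $f(0)\leq f(1)$, i.e.\ (\ref{11}).

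For uniqueness when $E$ is simple, suppose $K\in B_{H,\epsilon}(E)$ is a second solution of (\ref{Bk}). Running the same argument on the geodesic $H(t) = He^{tv}$ with $v = \log(H^{-1}K)\in S_H(E)$ again gives convexity of $f$ on $[0,1]$, but now both endpoints are critical points, so $f'(0) = f'(1) = 0$. A convex function with vanishing derivatives at both ends of $[0,1]$ is constant, so $f''(t)\equiv 0$. At $t=0$, the pointwise non-negative integrand must then vanish identically on $M$, and the strict form of (\ref{p1}) forces $\bar\partial v \equiv 0$. Thus $v$ is a holomorphic section of $\End(E)$; simplicity gives $H^0(M,\End(E)) = \mathbb C\cdot \Id_E$, and $H$-self-adjointness of $v$ forces $v = a\,\Id_E$ with $a\in\mathbb R$. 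Hence $K = e^a H$.

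The main obstacle is the propagation of $\sigma_k$-$\omega$-positivity from $H$ to every point on the geodesic $H(t)$: the hypothesis only provides (\ref{p1}) at $H$, whereas the convexity argument requires it along the whole segment. This is precisely where the $C^2$-type control built into $S_{H,\epsilon}(E)$ is essential, and it is also the reason the conclusion must remain local; removing the $\epsilon$-restriction would require the (currently open) convexity of $\mathcal H_{\omega,k}$ along geodesics discussed just before the theorem.
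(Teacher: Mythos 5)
Your proposal is correct and follows essentially the same route as the paper's proof: the second variation of $\mathcal{M}_{E,k}$ along the geodesic $H(t)=He^{ts}$, $s=\log H^{-1}K$, reduces to the $\sigma_{k}$-$\omega$-positivity pairing with $\xi=\overline{\partial}_{E}s$ (the paper's (\ref{vs2})), the bounds defining $B_{H,\epsilon}(E)$ keep $F_{H(t)}$ uniformly close to $F_{H}$ so that positivity, hence convexity, persists along the whole path, and then $f'(0)=0$ gives the local minimum while $f'(0)=f'(1)=0$ together with strict positivity and simplicity force $\overline{\partial}_{E}s\equiv 0$ and $K=e^{a}H$. The one step you assert rather than verify --- that $F_{H(t)}-F_{H}=\overline{\partial}_{E}\left(h(t)^{-1}\partial_{H}h(t)\right)$ stays $O(\epsilon)$ uniformly in $t$ --- is precisely what the paper establishes beforehand via the Gronwall-type estimate (\ref{z2}), so there is no substantive difference in approach.
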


For the special case $k=2$, under the strongly $\sigma_{2}$-$\omega$-positive hypothesis (see the definition in \S 3), we obtain the following uniqueness theorem.

\begin{theorem}\label{thm2}
Let  $(E, \overline{\partial}_{E})$ be a simple holomorphic vector bundle over an $n$-dimensional compact K\"ahler manifold $(M, \omega )$, $H$ and $K$ be two Hermitian metrics on $E$ whose curvature are strongly $\sigma_{2}$-$\omega$-positive on the whole $M$. If both $H$ and $K$ are solutions of (\ref{Bk}) for $k=2$, then there must exist a  constant $a$ such that $K=e^aH$.
\end{theorem}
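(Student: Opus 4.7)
The plan is to adapt Donaldson's convexity argument from the Hermitian--Yang--Mills setting to the functional $\mathcal{M}_{E,2}$. Let $A := \log(H^{-1}K) \in S_H(E)$, which is well defined because $H^{-1}K$ is positive self-adjoint with respect to $H$, and let $H(t) := H\, e^{tA}$ be the geodesic from $H(0) = H$ to $H(1) = K$; along this path $v(t) := H(t)^{-1}\dot H(t) \equiv A$ is constant in $t$. Set $f(t) := \mathcal{M}_{E,2}(H_0, H(t))$. The first variation, which is the computation producing the Euler--Lagrange equation (\ref{Bk}), gives
$$f'(t) \;=\; \int_M \tr\!\left[A\left(\frac{(\sqrt{-1}F_{H(t)})^{2}\wedge\tfrac{\omega^{n-2}}{(n-2)!}}{\omega^n/n!} \,-\, \lambda_{E,2}\,\Id_E\right)\right]\frac{\omega^n}{n!},$$
so $f'(0) = f'(1) = 0$ because both endpoints solve (\ref{Bk}).

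The key step is the second variation. Differentiating once more and integrating by parts (the calculation of \S 2, whose conclusion is the formula referred to by the authors as (v2)) reshapes $f''(t)$ into precisely the $\sigma_2$--$\omega$-positivity integrand (\ref{p1}) evaluated at $H(t)$, with $\xi = \overline{\partial}_E A$. The main obstacle, and the only place where the hypothesis truly bites, is to show that the \emph{strong} $\sigma_2$--$\omega$-positivity propagates along the entire geodesic $H(t)$: ordinary $\sigma_2$--$\omega$-positivity need not be preserved (its convexity is the open question highlighted before Theorem~\ref{thm1}), so one must unwrap the definition from \S 3 and verify that the strengthened inequality is a convex condition on $F_{H(t)}$ which passes from the endpoints to every $t \in [0,1]$. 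I would carry this out by expanding $F_{H(t)}$ along the explicit exponential geodesic and interpreting the strong condition as the semidefiniteness of a suitable quadratic form that is manifestly preserved by convex combinations.

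Once $f''(t) \ge 0$ on $[0,1]$, $f'$ is non-decreasing with $f'(0) = f'(1) = 0$, forcing $f' \equiv 0$ and $f'' \equiv 0$. In particular $f''(0) = 0$; by strict $\sigma_2$--$\omega$-positivity at $H$, the integrand at $t = 0$ is pointwise non-negative in $\overline{\partial}_E A$ and strictly positive unless it vanishes, so it vanishes pointwise and $A$ becomes a global holomorphic endomorphism of $(E, \overline{\partial}_E)$. Simplicity then forces $A = a\,\Id_E$ for some $a \in \mathbb{R}$, and therefore $K = H e^A = e^a H$, as claimed. The entire argument is formal once the propagation of strong $\sigma_2$--$\omega$-positivity along geodesics has been established.
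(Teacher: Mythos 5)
Your argument hinges on the claim that strong $\sigma_2$-$\omega$-positivity ``propagates'' from the endpoints to every $H(t)$ along the geodesic $H(t)=He^{tA}$, and you justify this by interpreting the strong condition as a convex condition on $F_{H(t)}$ that is ``preserved by convex combinations''. This is the genuine gap: $F_{H(t)}$ is not an affine or convex interpolation of $F_H$ and $F_K$. Along the geodesic one has $F_{H(t)}=F_H+\overline{\partial}_E\bigl(e^{-tA}\partial_H e^{tA}\bigr)$, and the correction term carries contributions involving $\partial_H A$ and $\overline{\partial}_E A$ that are in no way controlled by the curvatures at the two endpoints when $K$ is far from $H$; so even if the set of curvature-type tensors satisfying the strong inequalities were convex, this would say nothing about the intermediate $F_{H(t)}$. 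This is exactly the convexity problem the paper explicitly leaves open before Theorem \ref{thm1} (``At present we can not prove the convexity of $\mathcal{H}_{\omega , k}$''), and in the proof of Theorem \ref{thm1} positivity along the path is secured only for $K\in B_{H,\epsilon}(E)$, via the quantitative smallness estimate (\ref{z2}) and a constant $C_\epsilon\rightarrow 0$. No such smallness is available in Theorem \ref{thm2}, where $K$ is arbitrary, so your ``key step'' is not a routine verification but the missing ingredient on which the whole scheme rests.

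The paper's proof avoids geodesics and second variations altogether. Set $h=K^{-1}H$; since both metrics solve (\ref{Bk}) with $k=2$, one gets $0=\int_M \tr \{[(\sqrt{-1}F_H)^2-(\sqrt{-1}F_K)^2]\,h\}\wedge\omega^{n-2}$. The difference of squares is split asymmetrically as in (\ref{U1}), namely as $\sqrt{-1}F_H\wedge\sqrt{-1}\overline{\partial}_E(\partial_H h\, h^{-1})+\sqrt{-1}\overline{\partial}_E(h^{-1}\partial_K h)\wedge\sqrt{-1}F_K$, and after the Bianchi identity and Stokes one is left with two boundary-free terms: one of the shape $\tr [\sqrt{-1}F_H\wedge\xi^{\ast H}\wedge\xi]$ with $\xi=h^{-1/2}\overline{\partial}_E h$, the other of the shape $\tr [\eta^{\ast K}\wedge\sqrt{-1}F_K\wedge\eta]$ with $\eta=\overline{\partial}_E h\, h^{-1/2}$. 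The first is bounded below by the first inequality in the strong $\sigma_2$-$\omega$-positivity of $F_H$, the second by the second inequality applied to $F_K$ --- this is precisely why the strong definition contains two inequalities and why it is assumed at both metrics. One concludes $\overline{\partial}_E h=0$ and simplicity gives $h=e^{-a}\Id_E$, i.e. $K=e^aH$. To salvage your route you would have to actually prove the propagation of strong positivity along the geodesic (which would amount to settling the convexity question for this cone), or else abandon the path method in favour of such a two-endpoint integral identity.
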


In the following, we want to discuss the $k$-stability of holomorphic vector bundles. First, let us recall the definition of the Chern classes (Chern character class) of coherent sheaf $\mathcal{F}$.
If the base manifold $M$ is a projective manifold, there exists a resolution
\begin{equation}
0\rightarrow E_{1} \rightarrow E_{2} \rightarrow \cdots \rightarrow E_{N}\rightarrow \mathcal{F}\rightarrow 0
\end{equation}
for each coherent sheaf $\mathcal{F}$ by locally free sheaves. The total Chern class $c(\mathcal{F})$ is defined by
\begin{equation}\label{CS}
c(\mathcal{F})=c(E_{N})c(E_{N-1})^{-1}\cdots c(E_{N-i})^{(-1)^{i}}\cdots c(E_{1})^{(-1)^{N-1}}.
\end{equation}
The definition is indeed independent of the chosen resolution as proved by Borel and Serre \cite{BS}. Over a non-projective compact complex manifold, the Chern classes of a coherent sheaf can be defined by the classes of Atiyah-Hirzenbruch (\cite{AH}, see \cite{Gr} for details). Note that on a non-projective compact complex manifold, a coherent sheaf does not necessarily have a global resolution via locally free coherent sheaves. Atiyah and Hirzebruch \cite{AH} circumvent this problem by using real-analytic functions. Using Grauert's results \cite{Gra}, they prove that, by taking tensor with the sheaf $\mathcal{A}_{M}$ of germs of real analytic functions on $M$, there is a resolution by real analytic vector bundles
\begin{equation}
0\rightarrow E_{1} \rightarrow E_{2} \rightarrow \cdots \rightarrow E_{N}\rightarrow \mathcal{F}\otimes_{\mathcal{O}_{M}}\mathcal{A}_{M}\rightarrow 0,
\end{equation}
where $E_{i}'s$ are real analytic complex vector bundles. Then we can define the total Chern class $c(\mathcal{F})$ as that in (\ref{CS}). The definition is also shown to be independent of the choice of resolution. By the way, we can obtain the definition of the Chern character classes $ch_{k}(\mathcal{F})$. We define the $k$-$\omega $-degree of $\mathcal{F}$ by
\begin{equation}
deg_{k, \omega }(\mathcal{F} )=\int_{M}ch_{k}(\mathcal{F})\wedge \frac{\omega^{n-k}}{(n-k)!}.
\end{equation}

\medskip

\begin{definition}
Let $(M, \omega )$ be a compact K\"ahler manifold,  and  $\mathcal{E}$ be a torsion-free coherent sheaf
over $M$.
  $\mathcal{E}$ is
called $k$-$\omega $-stable (semi-stable), if for every   coherent sub-sheaf
$\mathcal{F}\hookrightarrow \mathcal{E}$ of lower rank, it holds:
\begin{equation}
\mu_{k, \omega } (\mathcal{F})=\frac{deg_{k, \omega } (\mathcal{F})}{rank \mathcal{F}}< (\leq ) \mu_{k, \omega } (\mathcal{E})=\frac{deg_{k, \omega } (\mathcal{E})}{rank \mathcal{E}}.
\end{equation}
\end{definition}

\medskip

When $k=1$, $k$-$\omega $-stability is just the stability in the sense of Mumford-Takemoto, and is MA-stability (\cite{Pin}) when $k=n$. Following the classical Donaldson-Uhlenbeck-Yau theorem, it is natural to conjecture:  {\it Let $(E, \overline{\partial }_{E})$ be a simple holomorphic vector bundle over a compact K\"ahler manifold, and the set $\mathcal{H}_{\omega , k}$ is non empty.  Then $E$ admits a Hermitian metric $H$ satisfying the bundle valued complex $k$-Hessian equations (\ref{Bk}) if and only if it is $k$-$\omega $-stable}.

\medskip

This paper is organized as follows. In Section 2, we prove that the generalized Donaldson's functional is well defined, and give the first variation formula of this functional. In section 3, we give the proof of theorem \ref{thm1} and theorem \ref{thm2}.

\bigskip
\noindent
{\bf Acknowledgements:} The authors would like to thank Xiangwen Zhang for many helpful discussion.

\section{Generalized Donaldson's functionals}
Let $(M, \omega )$ be an $n$-dimensional compact K\"ahler manifold and $(E, \overline{\partial}_{E})$ a holomorphic vector bundle over $M$.
Given any two Hermitian metrics $H_{0}$ and $H$  on bundle $E$, let
\begin{eqnarray}
h=H_{0}^{-1}H\in \Gamma (End(E)).
\end{eqnarray}
It is easy to check that
\begin{equation}
h^{\ast H_{0}}=h^{\ast H}=h,
\end{equation}
\begin{equation}
D_{H}=D_{H_{0}}+h^{-1}\partial_{H_{0}}h
\end{equation}
and
\begin{equation}
F_{H}=F_{H_{0}}+\overline{\partial}(h^{-1}\partial_{H_{0}}h).
\end{equation}

\medskip

Moreover, we can prove the following lemma

\begin{lemma}\label{l1}
The functional $\mathcal{M} _{E, k }(H_{0}, H)$ is well defined, i.e. it is independent of the choice of path $H(s)$.
\end{lemma}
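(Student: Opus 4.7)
My plan is to show that $\partial_{t}\mathcal{M}_{E,k}(H_{0},H(\cdot,t))\equiv 0$ along any smooth two-parameter family $H(s,t)$ of paths from $H_{0}$ to $H$ with fixed endpoints $H(0,t)=H_{0}$, $H(1,t)=H$. Because the correction term $-\int_{M}\lambda_{E,k}\log\det(H_{0}^{-1}H)\,\frac{\omega^{n}}{n!}$ depends only on the endpoints, it suffices to prove path-independence of $\mathcal{M}_{E,k}^{0}$.

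Setting $h=H^{-1}\partial_{s}H$ and $\kappa=H^{-1}\partial_{t}H$, the standard Chern-connection variational identities give $\partial_{s}F_{H}=\bar{\partial}\partial_{H}h$, $\partial_{t}F_{H}=\bar{\partial}\partial_{H}\kappa$, and the mixed-partial identity $\partial_{t}h-\partial_{s}\kappa=[h,\kappa]$. Differentiating $\tr[(\sqrt{-1}F_{H})^{k}h]$ in $t$ by the product rule and integrating the $\partial_{s}\kappa$-term by parts in $s$ (boundary terms vanish since $\kappa|_{s=0}=\kappa|_{s=1}=0$), the problem reduces to showing that for every fixed $(s,t)$ the following integral vanishes:
\begin{equation*}
\int_{M}\Bigl\{\sum_{j=0}^{k-1}\tr\bigl[X_{j}(\sqrt{-1}\bar{\partial}\partial_{H}\kappa)Y_{j}h-X_{j}(\sqrt{-1}\bar{\partial}\partial_{H}h)Y_{j}\kappa\bigr]+\tr\bigl[(\sqrt{-1}F_{H})^{k}[h,\kappa]\bigr]\Bigr\}\wedge\frac{\omega^{n-k}}{(n-k)!},
\end{equation*}
where $X_{j}:=(\sqrt{-1}F_{H})^{j}$ and $Y_{j}:=(\sqrt{-1}F_{H})^{k-1-j}$.

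The crux is to evaluate this integral in two different ways. First, using the $\End(E)$-curvature identity $\sqrt{-1}\bar{\partial}\partial_{H}\kappa=[\sqrt{-1}F_{H},\kappa]-\sqrt{-1}\partial_{H}\bar{\partial}\kappa$ (from $D_{H}^{2}=[F_{H},\cdot\,]$), the $[\sqrt{-1}F_{H},\kappa]$-piece sandwiched between $X_{j}$ and $Y_{j}$ telescopes in $j$ to $(\sqrt{-1}F_{H})^{k}\kappa-\kappa(\sqrt{-1}F_{H})^{k}$, which under the trace collapses to $-\tr[(\sqrt{-1}F_{H})^{k}[h,\kappa]]$, and analogously for $h$; this leaves only a residual $\partial_{H}\bar{\partial}$-sum after cancellation with the explicit bracket term. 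Second, by $D_{H}(\sqrt{-1}\partial_{H}\kappa)=\sqrt{-1}\bar{\partial}\partial_{H}\kappa$ (using $\partial_{H}^{2}=0$), the Bianchi identity $D_{H}F_{H}=0$, and Stokes' theorem together with bidegree vanishing such as $(n+1,n-1)=0$, one derives the integration-by-parts identity
\begin{equation*}
\int_{M}\tr\bigl[X_{j}(\sqrt{-1}\bar{\partial}\partial_{H}\kappa)Y_{j}h\bigr]\wedge\frac{\omega^{n-k}}{(n-k)!}=\int_{M}\tr\bigl[X_{j}(\sqrt{-1}\partial_{H}\kappa)Y_{j}\bar{\partial}h\bigr]\wedge\frac{\omega^{n-k}}{(n-k)!}
\end{equation*}
and, combined with the cyclic-trace identity $\sum_{j}\tr[X_{j}AY_{j}B]=-\sum_{j}\tr[X_{j}BY_{j}A]$ (valid for odd-degree $A,B$ after re-indexing $j\mapsto k-1-j$), it expresses the original integral as the negative of itself plus twice the commutator contribution. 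Combining the two evaluations forces the integral to equal zero.

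The main technical obstacle is the non-commutativity of $F_{H}$ with the variations $h$ and $\kappa$: unlike the line-bundle case or $k=1$, the sums $\sum_{j}\tr[X_{j}\alpha Y_{j}\beta]$ do not collapse to single summands, and the proof hinges on the precise interplay between the telescoping of $[\sqrt{-1}F_{H},\cdot]$-commutators and the re-indexing symmetry of the cyclic trace. Crucially, the Lie bracket $[h,\kappa]$ coming from $\partial_{t}h-\partial_{s}\kappa=[h,\kappa]$ (the curvature of the infinite-dimensional path space of metrics) is exactly what is needed to absorb the telescoped commutator residue. For $k=1$ this is the classical Donaldson calculation and for $k=n$ it is essentially Pingali's verification; the general $k$ case interpolates by the same mechanism.
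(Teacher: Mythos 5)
Your argument is correct, and it rests on the same core ingredients as the paper's proof of Lemma \ref{l1}: a two-parameter family with fixed endpoints, the variation formula for $F_{H}$, the Bianchi identity, the commutation $(\bar{\partial}\partial_{H}+\partial_{H}\bar{\partial})\kappa=[F_{H},\kappa]$, Stokes' theorem against the closed form $\frac{\omega^{n-k}}{(n-k)!}$, and the graded cyclic-trace symmetry under the re-indexing $j\mapsto k-1-j$. The packaging, however, is different. The paper proves the pointwise identity (\ref{claim1}): the difference of the mixed $\tau,s$ derivatives of $\tr[(\sqrt{-1}F)^{k}h^{-1}\partial h]$ is exhibited as a $\partial$-exact plus a $\bar{\partial}$-exact form (its proof is your same cancellation, carried out at the level of forms in (\ref{c3})--(\ref{c5})), after which wedging with $\omega^{n-k}$, Stokes, and the fundamental theorem of calculus in $s$ finish the argument; this local identity also yields the first variation formula (\ref{v1}) for free. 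You instead integrate by parts in the path parameter first and then kill the remaining spatial integral by evaluating it in two ways, so you only establish an integrated version of the cancellation, which is weaker but entirely sufficient and keeps the form-level bookkeeping lighter. One slip to repair when writing out details: after your first evaluation the two telescoped commutators contribute $-2\tr[(\sqrt{-1}F_{H})^{k}[h,\kappa]]$, so the explicit bracket term cancels only one copy and a residual commutator term survives alongside the $\partial_{H}\bar{\partial}$-sums, contrary to your intermediate sentence. The mechanism still closes exactly as you intend: writing $A_{1},A_{2}$ for the integrals of the two $\bar{\partial}\partial_{H}$-sums, $B_{1},B_{2}$ for the corresponding $\partial_{H}\bar{\partial}$-sums, and $C$ for the bracket integral, the commutation relation plus telescoping gives $A_{1}+B_{1}=-C$ and $A_{2}+B_{2}=C$, while your integration-by-parts identity (using $\bar{\partial}F_{H}=\partial_{H}F_{H}=0$ and the bidegree argument) combined with the cyclic re-indexing gives $B_{1}=-A_{2}$ and $B_{2}=-A_{1}$; hence $A_{1}-A_{2}+C=0$, which is the vanishing you need.
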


\begin{proof} Let $H(\tau, s)$ be a two-parameter family of metrics such that
\[
H(\tau , 0)=H_{0}\ \ \textit{\rm and}\ \ H(\tau , 1)=H,
\] where $\tau \in (-\epsilon , \epsilon )$, $s\in [0, 1]$. Then, $H(0 , s)$ and  $H(1 , s)$ are two paths connecting $H_{0}$ and $H$. Let
\[
h(\tau , s)=H_{0}^{-1}H(\tau , s).
\]
 It is easy to check that
\begin{equation}
\frac{\partial }{\partial \tau }F_{H(\tau , s)}=\overline{\partial }\partial_{H(\tau , s)}\left(h^{-1}\frac{\partial h}{\partial \tau }\right)
\end{equation}
and
\begin{equation}
\frac{\partial }{\partial s }F_{H(\tau , s)}=\overline{\partial }\partial_{H(\tau , s)}\left(h^{-1}\frac{\partial h}{\partial s }\right).
\end{equation}

\medskip

{\bf Claim: }
\begin{eqnarray}\label{claim1}
 &&\frac{\partial }{\partial \tau } \tr \left\{\left(\sqrt{-1}F_{H(\tau , s)}\right)^{k}h^{-1}\frac{\partial h}{\partial s}\right\}-\frac{\partial }{\partial s } \tr\left \{\left(\sqrt{-1}F_{H(\tau , s)}\right)^{k}h^{-1}\frac{\partial h}{\partial \tau}\right\}\\
&=& -\partial \tr \left\{ \sqrt{-1}\overline{\partial }\eta_{k} h^{-1}\frac{\partial h}{\partial s}\right\} -\overline{\partial } \tr\left\{ \sqrt{-1}\partial_{H} \phi_{k} h^{-1}\frac{\partial h}{\partial \tau}\right\}\nonumber
\end{eqnarray}
where
\begin{equation}\nonumber
\eta_{k}=\sum_{i=0}^{k-1}\left(\sqrt{-1}F_{H(\tau , s)}\right)^{i}h^{-1}\frac{\partial h}{\partial \tau }\wedge\left (\sqrt{-1}F_{H(\tau , s)}\right)^{k-1-i}
\end{equation}
and
\begin{equation}\nonumber
\phi_{k}=\sum_{i=0}^{k-1}\left(\sqrt{-1}F_{H(\tau , s)}\right)^{i}h^{-1}\frac{\partial h}{\partial s }\wedge \left(\sqrt{-1}F_{H(\tau , s)}\right)^{k-1-i}.
\end{equation}

\medskip

By the equality (\ref{claim1}), we have:
\begin{eqnarray}
& & \frac{d}{d \tau}\int_{0}^{1}\int_{M} \tr [(\sqrt{-1}F_{H(\tau , s)})^{k}H^{-1}(\tau , s)\frac{\partial H(\tau , s)}{\partial s}]\wedge \frac{\omega^{n-k}}{(n-k)!}ds\\\nonumber
&=&\int_{0}^{1}\int_{M}\frac{\partial }{\partial \tau } \{\tr [(\sqrt{-1}F_{H(\tau , s)})^{k}H^{-1}(\tau , s)\frac{\partial H(\tau , s)}{\partial s}]\}\wedge \frac{\omega^{n-k}}{(n-k)!}ds\\\nonumber
&=&\int_{0}^{1}\int_{M}\frac{\partial }{\partial s } \{\tr [(\sqrt{-1}F_{H(\tau , s)})^{k}H^{-1}(\tau , s)\frac{\partial H(\tau , s)}{\partial \tau }]\}\wedge \frac{\omega^{n-k}}{(n-k)!}ds\\\nonumber
&=&\int_{M} \{\tr [(\sqrt{-1}F_{H(\tau , s)})^{k}H^{-1}(\tau , s)\frac{\partial H(\tau , s)}{\partial \tau }]\}\wedge \frac{\omega^{n-k}}{(n-k)!}|_{s=0}^{1}\\\nonumber
&=& 0.
\end{eqnarray}
This shows that the functional $\mathcal{M} _{E, k }(H_{0}, H)$ is  independent of the choice of path $H(s)$. Therefore, we only need to prove the claim (\ref{claim1}).

\medskip

By applying the second Bianchi identity, we have
\begin{eqnarray}\label{c1}
& & \frac{\partial }{\partial \tau} \tr [(\sqrt{-1}F_{H(\tau , s)})^{k}H^{-1}(\tau , s)\frac{\partial H(\tau , s)}{\partial s}]\\\nonumber
&=& \sqrt{-1} \tr [\sum_{i=0}^{k-1}(\sqrt{-1}F_{H(\tau , s)})^{i}\wedge \overline{\partial }\partial_{H(\tau , s)}(h^{-1}\frac{\partial h}{\partial \tau })\wedge (\sqrt{-1}F_{H(\tau , s)})^{k-1-i}h^{-1}\frac{\partial h}{\partial s}]\\\nonumber
& &+ \tr [(\sqrt{-1}F_{H(\tau , s)})^{k}(-h^{-1}\frac{\partial h}{\partial \tau }h^{-1}\frac{\partial h}{\partial s}+h^{-1}\frac{\partial^{2} h}{\partial \tau \partial s })],\\\nonumber
&=&  \tr [\sqrt{-1} \overline{\partial }\partial_{H(\tau , s)}(\eta_{k}) h^{-1}\frac{\partial h}{\partial s}] + \tr [(\sqrt{-1}F_{H(\tau , s)})^{k}(-h^{-1}\frac{\partial h}{\partial \tau }h^{-1}\frac{\partial h}{\partial s}+h^{-1}\frac{\partial^{2} h}{\partial \tau \partial s })]
\end{eqnarray}
and similarly
\begin{eqnarray}\label{c2}
& & \frac{\partial }{\partial s} \tr [(\sqrt{-1}F_{H(\tau , s)})^{k}H^{-1}(\tau , s)\frac{\partial H(\tau , s)}{\partial \tau }]\\\nonumber
&=&  \tr [ \sqrt{-1}\overline{\partial }\partial_{H(\tau , s)}(\phi_{k}) h^{-1}\frac{\partial h}{\partial \tau}] + \tr [(\sqrt{-1}F_{H(\tau , s)})^{k}(-h^{-1}\frac{\partial h}{\partial s }h^{-1}\frac{\partial h}{\partial \tau }+h^{-1}\frac{\partial^{2} h}{\partial s \partial \tau })].
\end{eqnarray}
Using (\ref{c1}) and (\ref{c2}), we have
\begin{eqnarray}\label{c3}
&& \frac{\partial }{\partial \tau } \tr \{(\sqrt{-1}F_{H(\tau , s)})^{k}h^{-1}\frac{\partial h}{\partial s}\}-\frac{\partial }{\partial s } \tr \{(\sqrt{-1}F_{H(\tau , s)})^{k}h^{-1}\frac{\partial h}{\partial \tau}\}\\\nonumber
&=& -\tr [\sqrt{-1} \partial_{H(\tau , s)}\overline{\partial }(\eta_{k}) h^{-1}\frac{\partial h}{\partial s}]-\tr [ \sqrt{-1}\overline{\partial }\partial_{H(\tau , s)}(\phi_{k}) h^{-1}\frac{\partial h}{\partial \tau}] \\\nonumber
& & + \tr \{(\sqrt{-1}F_{H} \wedge \eta_{k} -\eta_{k}\wedge \sqrt{-1}F_{H}) h^{-1}\frac{\partial h}{\partial s}\}\\\nonumber
& & -\tr [(\sqrt{-1}F_{H(\tau , s)})^{k}(h^{-1}\frac{\partial h}{\partial \tau }h^{-1}\frac{\partial h}{\partial s })] +\tr [(\sqrt{-1}F_{H(\tau , s)})^{k}(h^{-1}\frac{\partial h}{\partial s }h^{-1}\frac{\partial h}{\partial \tau })].
\end{eqnarray}

By direct calculation, one can check that
\begin{eqnarray}\label{c4}
& &  \tr \{(\sqrt{-1}F_{H(\tau , s)} \wedge \eta_{k} -\eta_{k}\wedge \sqrt{-1}F_{H(\tau , s)}) h^{-1}\frac{\partial h}{\partial s}\}\\\nonumber
& & -\tr [(\sqrt{-1}F_{H(\tau , s)})^{k}(h^{-1}\frac{\partial h}{\partial \tau }h^{-1}\frac{\partial h}{\partial s })] +\tr [(\sqrt{-1}F_{H(\tau , s)})^{k}(h^{-1}\frac{\partial h}{\partial s }h^{-1}\frac{\partial h}{\partial \tau })]\\\nonumber
&=& 0,
\end{eqnarray}
and
\begin{eqnarray}\label{c5}
& &-\tr [\sqrt{-1} \partial_{H(\tau , s)}\overline{\partial }(\eta_{k}) h^{-1}\frac{\partial h}{\partial s}]-\tr [ \sqrt{-1}\overline{\partial }\partial_{H(\tau , s)}(\phi_{k}) h^{-1}\frac{\partial h}{\partial \tau}] \\\nonumber
&=&- \partial\tr [\sqrt{-1} \overline{\partial }(\eta_{k}) h^{-1}\frac{\partial h}{\partial s}]-\overline{\partial }\tr [ \sqrt{-1}\partial_{H(\tau , s)}(\phi_{k}) h^{-1}\frac{\partial h}{\partial \tau}] \\\nonumber
& & -\tr [\sqrt{-1} \overline{\partial }(\eta_{k})\wedge \partial_{H(\tau , s)} (h^{-1}\frac{\partial h}{\partial s})]-\tr [ \sqrt{-1}\partial_{H(\tau , s)}(\phi_{k})\wedge \overline{\partial }( h^{-1}\frac{\partial h}{\partial \tau})] \\\nonumber
&=&- \partial\tr [\sqrt{-1} \overline{\partial }(\eta_{k}) h^{-1}\frac{\partial h}{\partial s}]-\overline{\partial }\tr [ \sqrt{-1}\partial_{H(\tau , s)}(\phi_{k}) h^{-1}\frac{\partial h}{\partial \tau}] \\\nonumber
& & -\tr \{\sum_{i=0}^{k-1}(\sqrt{-1}F_{H})^{i}\wedge \overline{\partial }(h^{-1}\frac{\partial h}{\partial \tau })\wedge (\sqrt{-1}F_{H})^{k-1-i}\wedge \partial_{H }(h^{-1}\frac{\partial h}{\partial s })\}\\\nonumber
& & -\tr \{\sum_{j=0}^{k-1}(\sqrt{-1}F_{H})^{j}\wedge \partial_{H }(h^{-1}\frac{\partial h}{\partial s }) \wedge (\sqrt{-1}F_{H})^{k-1-j}\wedge \overline{\partial }(h^{-1}\frac{\partial h}{\partial \tau })\}\\\nonumber
&=&- \partial\tr [\sqrt{-1} \overline{\partial }(\eta_{k}) h^{-1}\frac{\partial h}{\partial s}]-\overline{\partial }\tr [ \sqrt{-1}\partial_{H(\tau , s)}(\phi_{k}) h^{-1}\frac{\partial h}{\partial \tau}] .
\end{eqnarray}
So (\ref{c3}), (\ref{c4}) and (\ref{c5}) imply the claim (\ref{claim1}).

\end{proof}

\medskip

In the following, we denote
\begin{equation}
\Psi_{k}(H)=\frac{(\sqrt{-1}F_{H})^{k}\wedge \frac{\omega^{n-k}}{(n-k)!}}{\frac{\omega^{n}}{n!}}.
\end{equation}
Let $H(t)$ be a family of metrics on $E$. We can consider two parameter family of metrics $H(t, s)$ such that $H(t, 0)=H_{0}$ and $H(t, 1)=H(t)$. By (\ref{claim1}), we have the following first variation formula:
\begin{eqnarray}\label{v1}
&&\frac{d}{d t}\mathcal{M} _{E, k }(H_{0}, H(t))\\\nonumber
&=&\int_{0}^{1}\int_{M}\frac{\partial }{\partial t } \{\tr [(\sqrt{-1}F_{H(t , s)})^{k}H^{-1}(t , s)\frac{\partial H(t , s)}{\partial s}]\}\wedge \frac{\omega^{n-k}}{(n-k)!}ds\\\nonumber
 &&-\frac{d}{d t}\int_{M}\lambda_{E, k}\log \det(h(t))\frac{\omega^{n}}{n!}\\\nonumber
&=& \int_{M}\tr \{[\Psi_{k}(H(t))-\lambda_{E, k}Id_{E}]H^{-1}(t)\frac{\partial H(t)}{\partial t}\} \frac{\omega^{n}}{n!}.
\end{eqnarray}
 Then we have the following lemma.

 \medskip

\begin{lemma}\label{l2}
 The Hermitian metric $H$ is a critical of the functional $\mathcal{M} _{E, k }$ if and only if it satisfies the equation
\begin{equation}\label{sk1}
\Psi_{k}(H)-\lambda_{E, k}Id_{E}=0.
\end{equation}
\end{lemma}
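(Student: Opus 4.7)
The lemma is essentially a direct reading of the first variation formula \eqref{v1} that was just established, together with a self-adjointness observation. I would proceed in three short steps.

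\emph{Sufficiency.} If $H$ satisfies \eqref{sk1}, then for every smooth one-parameter family $H(t)$ with $H(0)=H$ the integrand on the right-hand side of \eqref{v1} vanishes identically at $t=0$, so $H$ is a critical point of $\mathcal{M}_{E,k}(H_0,\cdot)$. This is immediate and does not require anything beyond the formula.

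\emph{Necessity.} For the converse I would test criticality against the full family of admissible variations. Given any $\eta\in S_H(E)$, set $H(t)=H\exp(t\eta)$; this is a smooth path of Hermitian metrics (because $\exp(t\eta)$ is $H$-self-adjoint) and a direct computation gives $H(t)^{-1}\partial_t H(t)|_{t=0}=\eta$. Plugging into \eqref{v1} and using criticality yields
\begin{equation*}
\int_M \tr\bigl\{[\Psi_k(H)-\lambda_{E,k}\Id_E]\,\eta\bigr\}\,\frac{\omega^n}{n!}=0
\end{equation*}
for every $\eta\in S_H(E)$.

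\emph{Pointwise vanishing.} Write $A:=\Psi_k(H)-\lambda_{E,k}\Id_E$. The step that needs real verification is that $\Psi_k(H)\in S_H(E)$; since $\lambda_{E,k}\in\mathbb{R}$ this then forces $A\in S_H(E)$. I would verify this self-adjointness by using the Hermitian symmetry of the Chern curvature, namely $F_H^{*H}=F_H$ as an $\End(E)$-valued $(1,1)$-form, together with the symmetry of wedge products of $(1,1)$-forms against the K\"ahler form $\omega$; this is the direct generalization of the classical $k=1$ fact that $\sqrt{-1}\Lambda_\omega F_H\in S_H(E)$ and is a local index-juggling computation. I expect this self-adjointness to be the only non-trivial step (essentially the main obstacle). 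Once $A\in S_H(E)$ is in hand, choosing $\eta=A$ in the displayed identity gives
\begin{equation*}
\int_M |A|_H^2\,\frac{\omega^n}{n!}=0,
\end{equation*}
forcing $A\equiv 0$ on $M$, which is precisely \eqref{sk1}.
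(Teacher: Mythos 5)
Your proposal is correct and takes essentially the same route as the paper, which states the lemma as an immediate consequence of the first variation formula (\ref{v1}); you merely fill in the standard details (admissible variations are exactly $S_{H}(E)$, the $H$-self-adjointness of $\Psi_{k}(H)$ coming from $(F_{\alpha\bar{\beta}})^{\ast H}=F_{\beta\bar{\alpha}}$, and testing with $\eta=\Psi_{k}(H)-\lambda_{E,k}\Id_{E}$). No gap.
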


\medskip

By the definition of  $\mathcal{M} _{E, k }$ and choosing a special path, it is easy to obtain the following lemma.

\medskip

\begin{lemma}\label{l3} (1) For any metrics $H_{i}$, $i=0, 1, 2$, we have:
\begin{equation}
\mathcal{M} _{E, k } (H_{0}, H_{1})+ \mathcal{M} _{E, k } (H_{1}, H_{2})=\mathcal{M} _{E, k } (H_{0}, H_{2}).
\end{equation}

(2) $\mathcal{M} _{E, k } (H, aH)=0$ for any positive constant $a$.

\end{lemma}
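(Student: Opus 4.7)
The plan is to exploit the path-independence from Lemma \ref{l1} to handle part (1), and to choose an explicit scaling path to handle part (2).

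For part (1), I would pick any smooth path $H_{a}(s)$ from $H_{0}$ to $H_{1}$ and any smooth path $H_{b}(s)$ from $H_{1}$ to $H_{2}$, and concatenate them (after reparametrization) to get a single path from $H_{0}$ to $H_{2}$ passing through $H_{1}$ at some intermediate time. Since by Lemma \ref{l1} the value of $\mathcal{M}_{E,k}^{0}$ does not depend on the chosen path, the defining $s$-integral for $\mathcal{M}_{E,k}^{0}(H_{0},H_{2})$ along this concatenated path splits as the sum $\mathcal{M}_{E,k}^{0}(H_{0},H_{1})+\mathcal{M}_{E,k}^{0}(H_{1},H_{2})$. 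For the correction term involving $\log\det$, I would use the multiplicativity $H_{0}^{-1}H_{2}=(H_{0}^{-1}H_{1})(H_{1}^{-1}H_{2})$, from which $\log\det(H_{0}^{-1}H_{2})=\log\det(H_{0}^{-1}H_{1})+\log\det(H_{1}^{-1}H_{2})$, and the additivity follows after integrating against $\lambda_{E,k}\omega^{n}/n!$.

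For part (2), the key observation is that scaling a Hermitian metric by a positive constant does not change the Chern connection nor its curvature. Choose the path $H(s)=a^{s}H$ connecting $H$ to $aH$; then $F_{H(s)}=F_{H}$ is independent of $s$, while $H(s)^{-1}\partial_{s}H(s)=(\log a)\,\mathrm{Id}_{E}$. The $\mathcal{M}_{E,k}^{0}$ contribution therefore collapses to
\[
(\log a)\int_{M}\tr\!\left[(\sqrt{-1}F_{H})^{k}\right]\wedge\frac{\omega^{n-k}}{(n-k)!}.
\]
By Chern--Weil theory, $\tr[(\sqrt{-1}F_{H})^{k}]$ represents $(2\pi)^{k}\,k!\,ch_{k}(E)$, so the above integral equals $\lambda_{E,k}\cdot\rank(E)\cdot\Vol(M,\omega)\cdot\log a$. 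On the other hand, $\log\det(H^{-1}(aH))=\rank(E)\log a$, so the correction term $-\int_{M}\lambda_{E,k}\log\det(H^{-1}(aH))\,\omega^{n}/n!$ contributes exactly $-\lambda_{E,k}\,\rank(E)\,\Vol(M,\omega)\,\log a$. The two terms cancel, giving $\mathcal{M}_{E,k}(H,aH)=0$.

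There is no serious obstacle here: both assertions are formal consequences of the definition of $\mathcal{M}_{E,k}$ together with the path-independence already proved. The only mild care needed is in part (1), to verify that concatenating two paths (and reparametrizing so that the joint is smooth, or else arguing by approximation) legitimately splits the $s$-integral, and in part (2), to correctly identify the topological constant so that the $ch_{k}$-integral matches $\lambda_{E,k}\rank(E)\Vol(M,\omega)$.
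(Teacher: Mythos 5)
Your argument is correct and is exactly the ``special path'' computation the paper has in mind (the paper omits the proof, noting only that it follows from the definition and a suitable choice of path): part (1) is path-independence from Lemma \ref{l1} plus multiplicativity of $\det$, and part (2) is the scaling path $a^{s}H$ together with the Chern--Weil identification of $\int_{M}\tr[(\sqrt{-1}F_{H})^{k}]\wedge\frac{\omega^{n-k}}{(n-k)!}$ with $\lambda_{E,k}\,\rank(E)\,\Vol(M,\omega)$. No gaps beyond the harmless reparametrization issue you already flag.
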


\

\section{Some unique results }

In this section, we consider the uniqueness of the solution to the equation (\ref{Bk}). Let $K$ and $H$ be two Hermitian metrics on the bundle $E$. Set
\begin{equation}
s=\log H^{-1}K.
\end{equation}
The path $H(t)=He^{ts}$ will be called by the geodesic connecting $H$ and $K$. Along the geodesic, we have
\begin{equation}
h(t)^{-1}\frac{\partial h(t)}{\partial t}=s ,
\end{equation}
\begin{equation}
\frac{\partial }{\partial t}(h(t)^{-1}\partial_{H}h(t))=\partial_{H}s -sh(t)^{-1}\partial_{H}h(t) +h(t)^{-1}\partial_{H}h(t)s
\end{equation}
and
\begin{eqnarray}\nonumber
\frac{\partial }{\partial t}(F_{H(t)}-F_{H})&=&\frac{\partial }{\partial t}(\overline{\partial }_{E}(h(t)^{-1}\partial_{H}h(t)))=\overline{\partial }_{E}(\partial_{H(t)}s)\\\nonumber
&=&\overline{\partial }_{E}(\partial_{H}s)-(\overline{\partial}_{E}s)\wedge (h(t)^{-1}\partial_{H}h(t)) -(h(t)^{-1}\partial_{H}h(t))\wedge (\overline{\partial}_{E}s)\\\nonumber
 & &-s \overline{\partial }_{E}(h(t)^{-1}\partial_{H}h(t)) + \overline{\partial }_{E}(h(t)^{-1}\partial_{H}h(t)) s,
\end{eqnarray}
where $h(t)=H^{-1}H(t)=e^{ts}$.

From above identities, we obtain
\[
\frac{\partial }{\partial t}|h(t)^{-1}\partial_{H}h(t)|_{H}^{2}\leq 2|\partial_{H}s|_{H}|h(t)^{-1}\partial_{H}h(t)|_{H} +4|s|_{H}|h(t)^{-1}\partial_{H}h(t)|_{H} ,
\]
and
\begin{eqnarray}
& &\frac{\partial }{\partial t}|\overline{\partial }_{E}(h(t)^{-1}\partial_{H}h(t))|_{H}^{2}\\\nonumber
&\leq &2|\overline{\partial }_{E}(\partial_{H}s)|_{H}|\overline{\partial }_{E}(h(t)^{-1}\partial_{H}h(t))|_{H}+4|\overline{\partial}_{E}s|_{H}| h(t)^{-1}\partial_{H}h(t)|_{H}|\overline{\partial }_{E}(h(t)^{-1}\partial_{H}h(t))|_{H}\\ \nonumber
& &+4|s|_{H}|\overline{\partial }_{E}(h(t)^{-1}\partial_{H}h(t))|_{H}.
\end{eqnarray}
Similarly, we can also compute
\begin{eqnarray}
& &\frac{\partial }{\partial t}\left(\sqrt{|\overline{\partial }_{E}(h(t)^{-1}\partial_{H}h(t))|_{H}^{2}+\epsilon}+\sqrt{|h(t)^{-1}\partial_{H}h(t)|_{H}^{2}+\epsilon}\right)\\\nonumber
&\leq &|\overline{\partial }_{E}(\partial_{H}s)|_{H}+|\partial_{H}s|_{H}\\\nonumber
& &+4(|s|_{H}+|\overline{\partial}_{E}s|_{H})\left(\sqrt{|\overline{\partial }_{E}(h(t)^{-1}\partial_{H}h(t))|_{H}^{2}+\epsilon}+\sqrt{|h(t)^{-1}\partial_{H}h(t)|_{H}^{2}+\epsilon}\right)
\end{eqnarray}
and
\begin{eqnarray}\label{z1}
& &\frac{\partial }{\partial t}\left[e^{-4t(|s|_{H}+|\overline{\partial}_{E}s|_{H})}\left(\sqrt{|\overline{\partial }_{E}(h(t)^{-1}\partial_{H}h(t))|_{H}^{2}+\epsilon}+\sqrt{|h(t)^{-1}\partial_{H}h(t)|_{H}^{2}+\epsilon}\right)\right]\\\nonumber
&\leq &(|\overline{\partial }_{E}(\partial_{H}s)|_{H}+|\partial_{H}s|_{H})e^{-4t(|s|_{H}+|\overline{\partial}_{E}s|_{H})},
\end{eqnarray}
where $\epsilon$ is a positive number.
Integrating the equality (\ref{z1}) from $0$ to $t$, and letting $\epsilon \rightarrow 0$, we obtain
\begin{equation}\label{z2}
|\overline{\partial }_{E}(h(t)^{-1}\partial_{H}h(t))|_{H}+|h(t)^{-1}\partial_{H}h(t)|_{H}\leq (|\overline{\partial }_{E}(\partial_{H}s)|_{H}+|\partial_{H}s|_{H})\frac{e^{4t(|s|_{H}+|\overline{\partial}_{E}s|_{H})}-1}{4(|s|_{H}+|\overline{\partial}_{E}s|_{H})}.
\end{equation}

\

\begin{proof}[Proof of Theorem \ref{thm1}]$\\$

By (\ref{v1}), we have the following second variation formula for the generalized Donaldson's functional (\ref{Fk}):
\begin{eqnarray}\label{v2}
&&\frac{d^2}{d t^2}\mathcal{M} _{E, k }(H_{0}, H(t))\\\nonumber
&=& \int_{M}\tr \{[\Psi_{k}(H(t))-\lambda_{E, k}Id_{E}]\frac{\partial }{\partial t}[H^{-1}(t)\frac{\partial H(t)}{\partial t}]\} \frac{\omega^{n}}{n!}\\\nonumber
&& + \int_{M}\tr \{\frac{\partial }{\partial t}[\Psi_{k}(H(t))]H^{-1}(t)\frac{\partial H(t)}{\partial t}\} \frac{\omega^{n}}{n!}
\end{eqnarray}
and
\begin{eqnarray}\label{v21}
&&\frac{\partial }{\partial t}[\Psi_{k}(H(t))]\\\nonumber
&=&\frac{1}{\frac{\omega^{n}}{n!}}\{\frac{\omega^{n-k}}{(n-k)!}\wedge [\sum_{i=0}^{k-1}(\sqrt{-1}F_{H(t)})^{i}\wedge \sqrt{-1}\overline{\partial}\partial_{H(t)}(H^{-1}(t)\frac{\partial H(t)}{\partial t}) \wedge (\sqrt{-1}F_{H(t)})^{k-1-i}]\}.
\end{eqnarray}

%\begin{equation}
%(\sqrt{-1}F_{H(t)})^{i}=\overbrace{(\sqrt{-1}F_{H(t)})\wedge (\sqrt{-1}F_{H(t)})\cdots\wedge (\sqrt{-1}F_{H(t)})}^i.
%\end{equation}

Let $H$ and $K$ be two Hermitian metrics on $E$, and $H(t)=He^{ts}$  the  geodesic connecting $H$ and $K$, where $s=\log H^{-1}K$. By (\ref{v2}), (\ref{v21}) and the Bianchi identity, we get
\begin{eqnarray}\label{vs2}
&&\frac{d^2}{d t^2}\mathcal{M} _{E, k }(H_{0}, H(t))\\\nonumber
&=&  \int_{M}\tr \{\frac{\partial }{\partial t}[\Psi_{k}(H(t))]s\} \frac{\omega^{n}}{n!}\\\nonumber
&=& \int_{M}\frac{\omega^{n-k}}{(n-k)!}\wedge
 \tr[\sum_{i=0}^{k-1}(\sqrt{-1}F_{H(t)})^{i}\wedge \sqrt{-1}\partial_{H(t)}s \wedge (\sqrt{-1}F_{H(t)})^{k-1-i}\wedge \overline{\partial}_{E}s].
\end{eqnarray}

By the assumption that $H$ is a Hermitian metric whose curvature is $\sigma_{k}$-$\omega$-positive on $M$,  there exists a positive constant $\delta$ such that
\begin{equation}\label{p12}
\frac{\omega^{n-k}}{(n-k)!}\wedge  \sqrt{-1}\tr[\sum_{i=0}^{k-1}(\sqrt{-1}F_{H})^{i}\wedge \xi^{\ast H} \wedge (\sqrt{-1}F_{H})^{k-1-i}\wedge \xi ]\geq \delta |\xi |_{\omega , H}^{2}\frac{\omega^{n}}{n!}
\end{equation}
for any $\End(E)$-valued $(0, 1)$-form $\xi$. If $K\in B_{H, \epsilon}(E)$, by (\ref{z2}), it is straightforward to check that
\begin{eqnarray}\label{p13}
&&\frac{\omega^{n-k}}{(n-k)!}\wedge  \sqrt{-1}\tr[\sum_{i=0}^{k-1}(\sqrt{-1}F_{H(t)})^{i}\wedge \xi^{\ast H(t)} \wedge (\sqrt{-1}F_{H(t)})^{k-1-i}\wedge \xi ]\\\nonumber
&\geq &(\delta -C_{\epsilon} )|\xi |_{\omega , H}^{2}\frac{\omega^{n}}{n!},
\end{eqnarray}
where $C_{\epsilon}$ is a constant depending only on $\epsilon $ and $\sup_{M} |F_{H}|_{\omega , H}$, and $C_{\epsilon } \rightarrow 0$ as $\epsilon \rightarrow 0$. When $\epsilon $ is small enough, (\ref{p13}) implies that every $H(t)$ is a Hermitian metric whose curvature is also $\sigma_{k}$-$\omega$-positive on $M$. By (\ref{vs2}), we have
\begin{equation}\label{vs21}
\frac{d^2}{d t^2}\mathcal{M} _{E, k }(H_{0}, H(t))\geq  \frac{\delta }{2}\int_{M} |\overline{\partial}_{E}s|_{\omega , H}^{2} \frac{\omega^{n}}{n!}\geq 0
\end{equation}
for every $t\in [0, 1]$.

By the assumption that $H$ is a solution of the equation (\ref{Bk}), (\ref{v1}) implies that $$\frac{d}{d t}\mathcal{M} _{E, k }(H_{0}, H(t))|_{t=0}=0$$ and then $\frac{d}{d t}\mathcal{M} _{E, k }(H_{0}, H(t))\geq 0$ for every $t\in [0, 1]$. Hence we know that
\begin{equation}
\mathcal{M} _{E, k }(H_{0}, H)\leq \mathcal{M} _{E, k }(H_{0}, K)
\end{equation}
for every $K\in B_{H, \epsilon}(E)$ when $\epsilon $ is small enough, i.e. $H$ is a local minimum point of the generalized Donaldson's functional (\ref{Fk}).

Furthermore,  if $K\in B_{H, \epsilon}(E) $ is another solution of (\ref{Bk}), (\ref{vs21}) implies $\overline{\partial}_{E}s \equiv 0$, i.e. $s$ is a holomorphic section of $\End(E)$.  Under the assumption that  the holomorphic bundle $(E, \overline{\partial}_{E})$ is simple, we know that $s=aId_{E}$, and then  $K=e^aH$.

\end{proof}

\

Set $h=K^{-1}H$ and recall that
 \begin{equation}
 D_{H}-D_{K}=h^{-1}\partial_{K}h , \quad  F_{H}-F_{K}=\overline{\partial}_{E}(h^{-1}\partial_{K}h).
 \end{equation}
By direct calculation, we have
 \begin{eqnarray}\label{U1}
 &&(\sqrt{-1}F_{H})^{k}-(\sqrt{-1}F_{K})^{k}\\\nonumber
 &=&\sum_{i=1}^{k}(\sqrt{-1}F_{H})^{k-i+1}\wedge (\sqrt{-1}F_{K})^{i-1}-\sum_{i=1}^{k}(\sqrt{-1}F_{H})^{k-i}\wedge (\sqrt{-1}F_{K})^{i}\\\nonumber
 &=&\sum_{i=1}^{k}(\sqrt{-1}F_{H})^{k-i}\wedge (\sqrt{-1}F_{H}-\sqrt{-1}F_{K}) \wedge (\sqrt{-1}F_{K})^{i-1}\\\nonumber
 &=&\sum_{i=1}^{k}(\sqrt{-1}F_{H})^{k-i}\wedge (\sqrt{-1}\overline{\partial}_{E}(h^{-1}\partial_{K}h)) \wedge (\sqrt{-1}F_{K})^{i-1}.
 \end{eqnarray}

 \

In the following, we will focus on the case $k=2$.

\begin{definition} Let $H$ be a Hermitian metric on holomorphic vector  bundle $(E, \overline{\partial }_{E})$ over a compact K\"ahler manifold $(M, \omega )$. $H$ is said to be strongly $\sigma_{2}$-$\omega$-positive at a point $p\in M$ if its curvature $\sqrt{-1}F_{H}$ satisfies that, for every non-zero locally $End(E)$-valued $(0, 1)$-form $\xi$, there is
\begin{equation}\label{p1}
\frac{\omega^{n-2}}{(n-2)!}\wedge  \sqrt{-1}\tr[(\sqrt{-1}F_{H})\wedge \xi^{\ast H} \wedge  \xi ]|_{p}>0
\end{equation}
and
\begin{equation}\label{p2}
\frac{\omega^{n-2}}{(n-2)!}\wedge  \sqrt{-1}\tr[ \xi^{\ast H} \wedge (\sqrt{-1}F_{H})\wedge \xi ]|_{p}>0.
\end{equation}
\end{definition}

\medskip

We recall the definition of Nakano positivity and dual Nakano positivity.

\begin{definition}
A Hermitian metric $H$ on the holomorphic vector bundle $(E, \overline{\partial }_{E})$ is said to be Nakano-positive, if for any nonzero $n$-tuple  $\{u^{\alpha }\}_{\alpha =1 }^{n}$ of sections of $E$, it holds that
\begin{equation}
\langle F_{H}(\frac{\partial }{\partial z^{\alpha }}, \frac{\partial }{\partial \bar{z}^{\beta }})u^{\alpha }, u^{\beta }  \rangle_{H} >0.
\end{equation}
A metric $H$ is said to be dual  Nakano-positive, if for any nonzero $n$-tuple  $\{v^{\alpha }\}_{\alpha =1 }^{n}$ of sections of $E$, it holds that
\begin{equation}
\langle F_{H}(\frac{\partial }{\partial z^{\alpha }}, \frac{\partial }{\partial \bar{z}^{\beta }})v^{\beta }, v^{\alpha }  \rangle_{H} >0.
\end{equation}
\end{definition}

In \cite{LSY}, Liu, Sun and Yang found some Hermitian metrics that are both Nakano positive and dual Nakano positive. In the following, we will show that Nakano positivity implies (\ref{p1}) and dual Nakano positivity implies (\ref{p2}), i.e., we obtain the following lemma.

 \begin{lemma}
 Let  $(E, \overline{\partial}_{E})$ be a holomorphic vector bundle over an $n$-dimensional compact K\"ahler manifold $(M, \omega )$, $H$ be a Hermitian metric on $E$. If $H$ is Nakano-positive and dual Nakano-positive, then $H$ must be strongly $\sigma_{2}$-$\omega$-positive.
 \end{lemma}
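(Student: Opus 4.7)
The plan is to turn each of the two inequalities defining strong $\sigma_2$-$\omega$-positivity at a point $p$ into an algebraic expression which is visibly a sum of Nakano (resp.\ dual Nakano) positive quadratic forms on explicit $n$-tuples in $E_p$. Working in normal coordinates for $\omega$ at $p$ and in a unitary $H$-frame $\{e_c\}_{c=1}^{r}$ of $E_p$, write $F_H=F_{\alpha\bar\beta}\,dz^\alpha\wedge d\bar z^\beta$ and $\xi=\xi_{\bar\alpha}\,d\bar z^\alpha$, so that $\xi^{\ast H}=\xi_{\bar\alpha}^{\ast}\,dz^\alpha$. The pointwise identity
\[
\sqrt{-1}\,dz^\alpha\wedge d\bar z^\beta\wedge\sqrt{-1}\,dz^\gamma\wedge d\bar z^\delta\wedge\tfrac{\omega^{n-2}}{(n-2)!}=(\delta_{\alpha\beta}\delta_{\gamma\delta}-\delta_{\alpha\delta}\delta_{\gamma\beta})\tfrac{\omega^{n}}{n!}
\]
produces, after multiplying the factors of $\sqrt{-1}\,\tr[(\sqrt{-1}F_H)\wedge\xi^{\ast H}\wedge\xi]\wedge\omega^{n-2}/(n-2)!$ out and taking trace, the expression
\[
\left(\sum_{\alpha,\gamma}\tr(F_{\alpha\bar\alpha}\xi_{\bar\gamma}^{\ast}\xi_{\bar\gamma})-\sum_{\alpha,\gamma}\tr(F_{\alpha\bar\gamma}\xi_{\bar\gamma}^{\ast}\xi_{\bar\alpha})\right)\tfrac{\omega^n}{n!},
\]
and the $\alpha=\gamma$ diagonal contributions cancel between the two sums.

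Next introduce $\tilde w^{(c)}_\alpha:=\xi_{\bar\alpha}^{\ast}e_c\in E_p$ and expand each trace as $\tr(A)=\sum_c\langle A\,e_c,e_c\rangle_H$; the two traces become $\sum_c\langle F_{\alpha\bar\alpha}\tilde w^{(c)}_\gamma,\tilde w^{(c)}_\gamma\rangle_H$ and $\sum_c\langle F_{\alpha\bar\gamma}\tilde w^{(c)}_\gamma,\tilde w^{(c)}_\alpha\rangle_H$. Pairing each ordered $(\alpha,\gamma)$ with $(\gamma,\alpha)$ and using the unitary-frame relation $\langle F_{\gamma\bar\alpha}v,w\rangle_H=\overline{\langle F_{\alpha\bar\gamma}w,v\rangle_H}$, the contribution of each unordered pair $\{\alpha,\gamma\}$ and each $c$ reduces to
\[
\langle F_{\alpha\bar\alpha}\tilde w^{(c)}_\gamma,\tilde w^{(c)}_\gamma\rangle_H+\langle F_{\gamma\bar\gamma}\tilde w^{(c)}_\alpha,\tilde w^{(c)}_\alpha\rangle_H-2\,\mathrm{Re}\langle F_{\alpha\bar\gamma}\tilde w^{(c)}_\gamma,\tilde w^{(c)}_\alpha\rangle_H,
\]
which is exactly $\sum_{\mu,\nu}\langle F_{\mu\bar\nu}u^\mu,u^\nu\rangle_H$ evaluated on the $n$-tuple $u^\mu=\delta^\mu_\alpha\tilde w^{(c)}_\gamma-\delta^\mu_\gamma\tilde w^{(c)}_\alpha$. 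Nakano positivity of $H$ therefore makes every such bracket nonnegative, vanishing only if $\tilde w^{(c)}_\alpha=\tilde w^{(c)}_\gamma=0$. Summing over all unordered pairs $\{\alpha,\gamma\}$ (possible since $n\geq 2$) and all $c$ yields (\ref{p1}), with strict positivity unless $\xi_{\bar\alpha}^{\ast}e_c=0$ for all $\alpha,c$, i.e.\ $\xi=0$ at $p$.

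The inequality (\ref{p2}) is proved in parallel after permuting the wedge factors: the same bookkeeping yields the analogous identity with $\xi_{\bar\gamma}\xi_{\bar\gamma}^{\ast}$ and $\xi_{\bar\alpha}\xi_{\bar\gamma}^{\ast}$ in place of $\xi_{\bar\gamma}^{\ast}\xi_{\bar\gamma}$ and $\xi_{\bar\gamma}^{\ast}\xi_{\bar\alpha}$, and the substitution $w^{(c)}_\alpha:=\xi_{\bar\alpha}e_c$ turns each paired bracket into the dual Nakano form $\sum_{\mu,\nu}\langle F_{\mu\bar\nu}v^\nu,v^\mu\rangle_H$ evaluated on $v^\mu=\delta^\mu_\alpha w^{(c)}_\gamma-\delta^\mu_\gamma w^{(c)}_\alpha$, whose non-negativity (and strict positivity unless $\xi=0$) is exactly the dual Nakano positivity of $H$. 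The only non-trivial part of the argument is the sign-tracking in the first step: the cross term must emerge with precisely the sign $-\tr(F_{\alpha\bar\gamma}\xi_{\bar\gamma}^{\ast}\xi_{\bar\alpha})$, for otherwise the two ordered-pair contributions would not recombine into a single Nakano-testable quadratic form on the tuple $(\tilde w_\alpha,\tilde w_\gamma)$.
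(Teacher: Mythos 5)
Your proposal is correct and follows essentially the same route as the paper: expand in a unitary frame and normal coordinates, reduce the wedge with $\omega^{n-2}/(n-2)!$ to the diagonal-minus-cross-term expression, and test Nakano (resp.\ dual Nakano) positivity on the tuples $u^{\mu}=\delta^{\mu}_{\alpha}\,\xi_{\bar\gamma}^{\ast}(e_c)-\delta^{\mu}_{\gamma}\,\xi_{\bar\alpha}^{\ast}(e_c)$ (resp.\ $v^{\mu}=\delta^{\mu}_{\alpha}\,\xi_{\bar\gamma}(e_c)-\delta^{\mu}_{\gamma}\,\xi_{\bar\alpha}(e_c)$). The only cosmetic differences are your Kronecker-delta packaging of the wedge identity and the $2\,\mathrm{Re}$ recombination of the cross terms, where the paper simply keeps both ordered cross terms after symmetrizing over $\alpha<\beta$.
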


\begin{proof}
Let $\xi$ be an $\End(E)$-valued $(0, 1)$-form. Choose suitable coordinates such that
\begin{equation*}
\omega=\sqrt{-1}dz^{\alpha}\wedge d\bar{z}^{\alpha}, \quad \xi=\xi_{\bar{\beta}}d\bar{z}^{\beta}, \quad F_{H}=F_{\alpha \bar{\beta }}dz^{\alpha }\wedge d\bar{z}^{\beta},
\end{equation*}
where $\xi_{\bar{\beta}}, F_{\alpha \bar{\beta }}\in \Gamma(\End(E))$.
After direct calculations, we find
\begin{equation}\label{N1}
\begin{split}
&\sqrt{-1}{\rm tr}((\sqrt{-1}F_H)\wedge \xi^{\ast H} \wedge \xi) \wedge \frac{\omega^{n-2}}{(n-2)!}\\
=&(\sqrt{-1})^2{\rm tr}(F_{\alpha\bar{\beta}}\cdot (\xi_{\bar{\gamma}})^{\ast H} \cdot \xi_{\bar{\delta}}) dz^{\alpha}\wedge d\bar{z}^{\beta}\wedge dz^{\gamma}\wedge d\bar{z}^{\delta} \wedge \frac{\omega^{n-2}}{(n-2)!}\\
=& (\sqrt{-1})^2 \sum_{\alpha\neq \beta}{\rm tr}(F_{\alpha\bar{\beta}} (\xi_{\bar{\beta}})^{\ast H} \xi_{\bar{\alpha}}) dz^{\alpha}\wedge d\bar{z}^{\beta}\wedge dz^{\beta}\wedge d\bar{z}^{\alpha} \wedge \frac{\omega^{n-2}}{(n-2)!}\\
& + (\sqrt{-1})^2 \sum_{\alpha\neq \beta}{\rm tr}(F_{\alpha\bar{\alpha}} (\xi_{\bar{\beta}})^{\ast H} \xi_{\bar{\beta}}) dz^{\alpha}\wedge d\bar{z}^{\alpha}\wedge dz^{\beta}\wedge d\bar{z}^{\beta} \wedge \frac{\omega^{n-2}}{(n-2)!}\\
=& \sum_{\alpha\neq \beta}{\rm tr}(F_{\alpha\bar{\alpha}} (\xi_{\bar{\beta}})^{\ast H} \xi_{\bar{\beta}}-F_{\alpha\bar{\beta}} (\xi_{\bar{\beta}})^{\ast H} \xi_{\bar{\alpha}})\frac{\omega^{n}}{n!}\\
=& \sum_{i=1}^r \sum_{\alpha\neq \beta} (\langle F_{\alpha\bar{\alpha}} (\xi_{\bar{\beta}})^{\ast H}(e_i),  (\xi_{\bar{\beta}})^{\ast H}(e_i) \rangle- \langle F_{\alpha\bar{\beta}} (\xi_{\bar{\beta}})^{\ast H}(e_i), (\xi_{\bar{\alpha}})^{\ast H}(e_i)\rangle)\frac{\omega^{n}}{n!}\\
=& \sum_{i=1}^r \sum_{\alpha< \beta} (\langle F_{\alpha\bar{\alpha}} (\xi_{\bar{\beta}})^{\ast H}(e_i),  (\xi_{\bar{\beta}})^{\ast H}(e_i) \rangle- \langle F_{\alpha\bar{\beta}} (\xi_{\bar{\beta}})^{\ast H}(e_i), (\xi_{\bar{\alpha}})^{\ast H}(e_i)\rangle\\
&+ \langle F_{\beta\bar{\beta}} (\xi_{\bar{\alpha}})^{\ast H}(e_i),  (\xi_{\bar{\alpha}})^{\ast H}(e_i) \rangle- \langle F_{\beta\bar{\alpha}} (\xi_{\bar{\alpha}})^{\ast H}(e_i), (\xi_{\bar{\beta}})^{\ast H}(e_i)\rangle)\frac{\omega^{n}}{n!},
\end{split}
\end{equation}
where $\{e_{i}\}_{i=1}^{r}$ is a unitary frame of $E$ with respect to the metric $H$.
   For every $\alpha< \beta$, set $u=(u^1, \cdots, u^n)$, $u^{\alpha}= (\xi_{\bar{\beta}})^{\ast H}(e_i)$, $u^{\beta}= -(\xi_{\bar{\alpha}})^{\ast H}(e_i)$, $u^{\gamma}=0$ for $\gamma\neq \alpha$ and $\gamma\neq \beta$. If the $n$-tuple of section $u$ is nonzero, by the assumption that $H$ is Nakano positive, we know
\begin{equation}\label{N2}
\begin{split}
0< &\sum_{\gamma, \delta}\langle F_{\gamma\bar{\delta}} u^{\gamma}, u^{\delta}\rangle=\sum_{\delta}(\langle F_{\alpha\bar{\delta}}(\xi_{\bar{\beta}})^{\ast H}(e_i), u^{\delta}\rangle- \langle F_{\beta\bar{\delta}}(\xi_{\bar{\alpha}})^{\ast H}(e_i), u^{\delta}\rangle)\\
=&\langle F_{\alpha\bar{\alpha}}(\xi_{\bar{\beta}})^{\ast H}(e_i), (\xi_{\bar{\beta}})^{\ast H}(e_i)\rangle- \langle F_{\alpha\bar{\beta}}(\xi_{\bar{\beta}})^{\ast H}(e_i), (\xi_{\bar{\alpha}})^{\ast H}(e_i)\rangle\\
&- \langle F_{\beta\bar{\alpha}}(\xi_{\bar{\alpha}})^{\ast H}(e_i), (\xi_{\bar{\beta}})^{\ast H}(e_i)\rangle + \langle F_{\beta\bar{\beta}}(\xi_{\bar{\alpha}})^{\ast H}(e_i), (\xi_{\bar{\alpha}})^{\ast H}(e_i)\rangle .
\end{split}
\end{equation}
From (\ref{N1}) and (\ref{N2}), we see that (\ref{p1}).

On the other hand,
\begin{equation}\label{N3}
\begin{split}
&\sqrt{-1}{\rm tr}(\xi^{\ast H}\wedge (\sqrt{-1}F_H)\wedge \xi) \wedge \frac{\omega^{n-2}}{(n-2)!}\\
=&(\sqrt{-1})^2{\rm tr}((\xi_{\bar{\alpha}})^{\ast H}\cdot F_{\gamma\bar{\delta}}\cdot \xi_{\bar{\beta}}) dz^{\alpha}\wedge dz^{\gamma}\wedge d\bar{z}^{\delta}\wedge d\bar{z}^{\beta}\wedge \frac{\omega^{n-2}}{(n-2)!}\\
=&\sum_{\alpha\neq \beta} (\sqrt{-1})^2 {\rm tr}((\xi_{\bar{\alpha}})^{\ast H} F_{\beta\bar{\alpha}}\xi_{\bar{\beta}})dz^{\alpha}\wedge dz^{\beta}\wedge d\bar{z}^{\alpha}\wedge d\bar{z}^{\beta}\wedge \frac{\omega^{n-2}}{(n-2)!}\\
&+\sum_{\alpha\neq \beta}  (\sqrt{-1})^2 {\rm tr}((\xi_{\bar{\alpha}})^{\ast H} F_{\beta\bar{\beta}}\xi_{\bar{\alpha}})dz^{\alpha}\wedge dz^{\beta}\wedge d\bar{z}^{\beta} \wedge d\bar{z}^{\alpha}\wedge \frac{\omega^{n-2}}{(n-2)!}\\
=& \sum_{\alpha\neq \beta} {\rm tr}((\xi_{\bar{\alpha}})^{\ast H} F_{\beta\bar{\beta}}\xi_{\bar{\alpha}}-(\xi_{\bar{\alpha}})^{\ast H} F_{\beta\bar{\alpha}}\xi_{\bar{\beta}}) \frac{\omega^{n}}{n!}\\
=& \sum_{i=1}^r \sum_{\alpha\neq \beta} (\langle F_{\beta\bar{\beta}} \xi_{\bar{\alpha}}(e_i), \xi_{\bar{\alpha}}(e_i)\rangle- \langle F_{\beta\bar{\alpha}}\xi_{\bar{\beta}}(e_i),  \xi_{\bar{\alpha}}(e_i)\rangle)\frac{\omega^{n}}{n!}\\
=& \sum_{i=1}^r \sum_{\alpha< \beta} (\langle F_{\beta\bar{\beta}} \xi_{\bar{\alpha}}(e_i), \xi_{\bar{\alpha}}(e_i)\rangle- \langle F_{\beta\bar{\alpha}}\xi_{\bar{\beta}}(e_i),  \xi_{\bar{\alpha}}(e_i)\rangle\\
&+ \langle F_{\alpha\bar{\alpha}} \xi_{\bar{\beta}}(e_i), \xi_{\bar{\beta}}(e_i)\rangle- \langle F_{\alpha\bar{\beta}}\xi_{\bar{\alpha}}(e_i),  \xi_{\bar{\beta}}(e_i)\rangle)\frac{\omega^{n}}{n!}.
\end{split}
\end{equation}
 For any $\alpha< \beta$, set $v=(v^1, \cdots, v^n)$, $v^{\alpha}= \xi_{\bar{\beta}}(e_i)$, $v^{\beta}= -\xi_{\bar{\alpha}}(e_i)$, $u^{\gamma}=0$ for $\gamma\neq \alpha$ and $\gamma\neq \beta$. If $v$ is nonzero, the dual Nakano positivity implies
 \begin{equation}\label{N4}
\begin{split}
0< &\sum_{\gamma, \delta}\langle F_{\gamma\bar{\delta}} v^{\delta}, v^{\gamma}\rangle=\sum_{\delta}(\langle F_{\alpha\bar{\delta}}v^{\delta}, \xi_{\bar{\beta}}(e_i)\rangle- \langle F_{\beta\bar{\delta}}v^{\delta}, \xi_{\bar{\alpha}}(e_i)\rangle)\\
=&\langle F_{\alpha\bar{\alpha}}\xi_{\bar{\beta}}(e_i), \xi_{\bar{\beta}}(e_i)\rangle- \langle F_{\alpha\bar{\beta}}\xi_{\bar{\alpha}}(e_i), \xi_{\bar{\beta}}(e_i)\rangle\\
&- \langle F_{\beta\bar{\alpha}}\xi_{\bar{\beta}}(e_i), \xi_{\bar{\alpha}}(e_i)\rangle + \langle F_{\beta\bar{\beta}}\xi_{\bar{\alpha}}(e_i), \xi_{\bar{\alpha}}(e_i)\rangle,
\end{split}
\end{equation}
then (\ref{N3}) and (\ref{N4}) mean (\ref{p2}).

\end{proof}

 Now, we consider the uniqueness of the solution of the following bundle valued complex $2$-Hessian equation
 \begin{equation}\label{s2}
 \frac{(\sqrt{-1}F_{H})^{2}\wedge \frac{\omega^{n-2}}{(n-2)!}}{\frac{\omega^{n}}{n!}}=\lambda_{E, 2}Id_{E}.
 \end{equation}

\begin{proof}[Proof of Theorem \ref{thm2}]By (\ref{U1}), we have
 \begin{equation}
 \begin{array}{ll}
 &(\sqrt{-1}F_{H})^{2}-(\sqrt{-1}F_{K})^{2}\\
 &=\sqrt{-1}F_{H}\wedge (\sqrt{-1}F_{H}-\sqrt{-1}F_{K})+(\sqrt{-1}F_{H}-\sqrt{-1}F_{K})\wedge \sqrt{-1}F_{K}\\
 &=\sqrt{-1}F_{H}\wedge \sqrt{-1}(\overline{\partial}_{E}(\partial_{H}h h^{-1}))+\sqrt{-1}(\overline{\partial}_{E}(h^{-1}\partial_{K}h))\wedge \sqrt{-1}F_{K}.\\
 \end{array}
 \end{equation}
Using the  strongly $\sigma_{2}$-$\omega$-positivity hypothesis, the Bianchi identity and Stokes formula, we derive
\begin{eqnarray}
\nonumber 0 &=&\int_{M}\tr \{[(\sqrt{-1}F_{H})^{2}-(\sqrt{-1}F_{K})^{2}]h\}\wedge \omega^{n-2}\\\nonumber
  &=&\int_{M}\tr \{[\sqrt{-1}F_{H}\wedge \sqrt{-1}(\overline{\partial}_{E}(\partial_{H}h h^{-1}))+\sqrt{-1}(\overline{\partial}_{E}(h^{-1}\partial_{K}h))\wedge \sqrt{-1}F_{K}]h\}\wedge \omega^{n-2}\\\nonumber
  &=&\int_{M}\tr \{\sqrt{-1}F_{H}\wedge (\sqrt{-1}\partial_{H}h h^{-\frac{1}{2}})\wedge h^{-\frac{1}{2}}\overline{\partial}_{E}h\}\wedge \omega^{n-2}\\\nonumber
  &&+ \int_{M}\tr \{\sqrt{-1}(h^{-\frac{1}{2}}\partial_{K}h)\wedge \sqrt{-1}F_{K}\wedge \overline{\partial}_{E}h h^{-\frac{1}{2}}\}\wedge \omega^{n-2}\\\nonumber
  &\geq& \tilde{c}\int_{M} \{|h^{-\frac{1}{2}}\overline{\partial}_{E}h|_{H}^{2}+|\overline{\partial}_{E}h h^{-\frac{1}{2}}|_{K}^{2}\}\omega^{n}
 \end{eqnarray}
 for some positive constant $\tilde{c}$, and
 then
 \begin{equation}
 \overline{\partial}_{E}h=0.
 \end{equation}
The simplicity of $(E , \overline{\partial }_{E})$ gives us  $h=e^{-a}\,{\rm Id}_{E}$, and then $K=e^{a}H$.

\end{proof}

\

%-----------------------------------------------------------------------------
\bibliographystyle{plain}

\end{document}